\documentclass[11pt]{article}

\usepackage[T1]{fontenc}
\usepackage[utf8]{inputenc}
\usepackage{amsmath,amssymb,amsthm,mathtools}
\usepackage{microtype}
\usepackage{xcolor}
\usepackage{hyperref}
\usepackage{fullpage}

\newtheorem{theorem}{Theorem}
\newtheorem{lemma}[theorem]{Lemma}

\newcommand{\E}{\mathbb E}
\newcommand{\R}{\mathbb R}
\renewcommand{\epsilon}{\varepsilon}

\DeclarePairedDelimiter{\abs}{\lvert}{\rvert}

\title{The Last Iterate of the Projected Stochastic Subgradient Method\\Pays a Logarithm, Even in Dimension One}

\author{Guglielmo Beretta\thanks{Department of Computer Science, Aarhus University, Aarhus, Denmark.}
\and Tommaso Cesari\thanks{School of Electrical Engineering and Computer Science, University of Ottawa, Ottawa, Canada.}
\and Roberto Colomboni\thanks{School of Mathematics, University of Bristol, Bristol, United Kingdom.}}

\begin{document}
\maketitle

\begin{abstract}
We prove that the projected stochastic subgradient method with the standard anytime stepsizes $\eta_t = \gamma/\sqrt t$ can suffer expected last-iterate optimization error of order $(\log n)/\sqrt n$ already on the real line.
Combined with the classical in-expectation upper bound, this establishes the sharp worst-case order of the projected stochastic subgradient method.
\end{abstract}

\section{Introduction}

The projected stochastic subgradient method is a classical algorithm for minimizing a convex objective function. Let $\mathcal X \subset \mathbb{R}^d$ be a nonempty closed convex subset, and let $f\colon\mathcal X\to\mathbb R$ be a convex function with a nonempty set of minimizers.
If the stochastic first-order oracle is conditionally unbiased and has uniformly bounded conditional second moments, with conditioning taken with respect to the history prior to each oracle call, then, when the anytime stepsizes $\eta_t = \gamma/\sqrt{t}$ are used, the expected optimization error of the last iterate is known to be $O\bigl(\log n)/\sqrt{n}\bigr)$ (see, for instance, Orabona~\cite[Theorem~2 and Corollary~3]{Orabona2020}).

In this note, we show that this rate is worst-case optimal already on the real line.
More precisely, for every $b$, $\gamma>0$ and every $n \ge 2$, there exist a convex function $f\colon\R\to\R$, minimized at $0$ and with Lipschitz constant at most $b/2$, and an additive noise process consisting of i.i.d.\ scaled Rademacher random variables with magnitude at most $b$, such that, when the stochastic subgradient method is initialized at $X_1 \coloneq 0$ and run with stepsizes $\eta_t=\gamma/\sqrt{t}$, the resulting stochastic subgradients are bounded by $b$ and its $n$-th iterate satisfies
\[
    \E\left[f(X_n)-\min_{x\in\R}f(x)\right]
\ge
    \frac{\gamma b^2}{24} \cdot \frac{\log n}{\sqrt{n}}.
\]
Thus, the classical $O\bigl((\log n)/\sqrt{n}\bigr)$ upper bound is sharp even in dimension one under additive, state-independent, uniformly bounded i.i.d.\ noise.

We remark that this behavior is in sharp contrast with the fixed-horizon setting.
Indeed, for constant stepsizes
\[
    \eta_1
=
    \dots
=
    \eta_n
=
    \gamma/\sqrt{n},
\]
Beretta et al.~\cite{BerettaEtAl2026} recently proved that, in dimension one and under additive i.i.d.\ subgradient noise with uniformly bounded variance, the last iterate achieves the optimal $O(n^{-1/2})$ rate, without the logarithmic factor.
Thus, even under particularly benign stochastic noise, the logarithmic loss may arise from the standard anytime schedule itself rather than from the noise assumptions alone.

\section{The scalar bounded-i.i.d.\ lower bound}\label{sec:lower}

Fix $\gamma>0$ and, for all $t \in \mathbb{N}$, define the stepsizes
\[
    \eta_t
\coloneq
    \frac{\gamma}{\sqrt t}.
\]
Given a convex function $f\colon\R\to\R$, a measurable selection $g(x)\in\partial f(x)$,
and a sequence of random variables $(W_t)_{t \ge 1}$, consider the recurrence
\begin{equation}\label{eq:scalar-update}
    X_{t+1}
=
    X_t-\eta_t\bigl(g(X_t)+W_t\bigr),
\qquad
    X_1
=
    0.
\end{equation}
Let
\[
    \mathcal F_t
\coloneq
    \sigma(X_1,W_1,\ldots,W_{t-1}).
\]
If $(W_t)_{t \ge 1}$ is a centered i.i.d.\ sequence, then $X_t$ is
$\mathcal F_t$-measurable and
\[
    \E\bigl[g(X_t)+W_t\mid\mathcal F_t\bigr]
=
    g(X_t)\in\partial f(X_t),
\]
so that \eqref{eq:scalar-update} is the recurrence of the stochastic subgradient method.

We recall that a Rademacher random variable is a random variable $\xi$ whose range is $\{-1,1\}$ for which $\mathbb{P} (\xi = -1) = 1/2 = \mathbb{P} (\xi = 1)$.
In the proof of our result, we use the following form of the Khintchine inequality \cite{Szarek1976}.

\begin{lemma}[Khintchine-Szarek]\label{lem:khintchine}
If $\xi_1$, $\dots$, $\xi_m$ are independent Rademacher random variables and
$a_1$, $\dots$, $a_m \in \R$, then
\[
    \frac{1}{\sqrt{2}} \left(\sum_{t=1}^m a_t^2\right)^{1/2}
\le
    \E\left[\abs*{\sum_{t=1}^m a_t \xi_t }\right]
\le
    \left(\sum_{t=1}^m a_t^2\right)^{1/2}.
\]
\end{lemma}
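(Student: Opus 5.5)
The upper bound is elementary and I would do it first. Write $S \coloneq \sum_{t=1}^m a_t\xi_t$. By independence and centering, $\E[\xi_s\xi_t]=\mathbf 1\{s=t\}$, so $\E[S^2]=\sum_t a_t^2$. Jensen's (or the Cauchy--Schwarz) inequality then gives $\E\abs{S}\le(\E[S^2])^{1/2}$.

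For the lower bound, homogeneity lets me assume $\sum_t a_t^2=1$. Replacing $\xi_t$ by $-\xi_t$ and relabelling, I may also take $a_1\ge a_2\ge\dots\ge a_m\ge 0$. I would then split into two cases according to the size of $a_1$.

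\emph{Case $a_1\ge 1/\sqrt2$.} Let $R\coloneq\sum_{t\ge2}a_t\xi_t$, which is symmetric and independent of $\xi_1$. Conditioning on $\xi_1$, and using symmetry of $R$ together with convexity of $x\mapsto\abs{x}$, I get
\[
\E\abs{S}=\E\abs{a_1+R}=\tfrac12\E\bigl[\abs{a_1+R}+\abs{a_1-R}\bigr]\ge a_1\ge \tfrac1{\sqrt2}.
\]

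\emph{Case $a_1<1/\sqrt2$, i.e.\ all $a_t^2<1/2$.} This is the main obstacle, because cruder tools lose the sharp constant. For instance, Hölder's inequality $\E S^2\le(\E\abs S)^{2/3}(\E S^4)^{1/3}$ combined with $\E S^4\le3$ only yields $1/\sqrt3$. I would therefore follow Haagerup's Fourier-analytic route. The starting point is the identity
\[
\abs{x}=\frac{2}{\pi}\int_0^\infty\frac{1-\cos(tx)}{t^2}\,dt .
\]
Taking expectations and using independence, $\E[\cos(tS)]=\prod_t\cos(a_tt)$, so
\[
\E\abs S=\frac2\pi\int_0^\infty\frac{1-\prod_t\cos(a_tt)}{t^2}\,dt.
\]
Next I bound $\prod_t\cos(a_tt)\le\prod_t\abs{\cos(a_tt)}$ and apply Hölder's inequality with exponents $1/a_t^2$, which sum correctly because $\sum_t a_t^2=1$. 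After the substitution $t\mapsto t/a_t$ in each factor, this reduces the claim to $\E\abs S\ge\sum_t a_t^2\,F(a_t^{-2})$, where
\[
F(s)\coloneq\frac{2}{\pi}\int_0^\infty\Bigl(1-\bigl\lvert\cos(t/\sqrt s)\bigr\rvert^{s}\Bigr)\frac{dt}{t^2}.
\]
The analytic heart of the proof is showing that $F$ is nondecreasing on $[2,\infty)$ with $F(2)=1/\sqrt2$. Since each $a_t^{-2}>2$ here, this gives $\E\abs S\ge\frac1{\sqrt2}\sum_t a_t^2=\frac1{\sqrt2}$.

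The value $F(2)=1/\sqrt2$ is a direct computation, since $\cos^2$ can be integrated explicitly. Monotonicity is the delicate calculus step; I would attempt it by comparing with the Gaussian limit $F(\infty)=\sqrt{2/\pi}$ and differentiating under the integral sign. If this becomes too technical, I would simply defer to Szarek's original argument~\cite{Szarek1976} for this case, since the lemma is quoted from there.
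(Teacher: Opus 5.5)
The paper does not prove this lemma; it quotes it from Szarek. Your upper bound, your case $a_1\ge 1/\sqrt2$, and your Fourier reduction to $\E\lvert S\rvert\ge\sum_t a_t^2F(a_t^{-2})$ are all correct, with two clarifications. First, the step you call Hölder's inequality must be the pointwise weighted AM--GM inequality $\prod_t\lvert\cos(a_tt)\rvert\le\sum_t a_t^2\lvert\cos(a_tt)\rvert^{1/a_t^2}$ (weights $a_t^2$ summing to $1$), applied inside $1-\prod_t\lvert\cos(a_tt)\rvert$. Hölder on the integral itself is unavailable, since $\int_0^\infty t^{-2}\,dt$ diverges. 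Second, no substitution is needed: $\cos(a_tt)=\cos(t/\sqrt s)$ with $s=a_t^{-2}$. Zero coefficients should be discarded beforehand.

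The genuine gap is the monotonicity of $F$ on $[2,\infty)$. This step carries the entire difficulty of the sharp constant, and your proposal only asserts it. ``Compare with the Gaussian limit and differentiate under the integral sign'' is not yet an argument. Falling back on Szarek for this case amounts to citing the lemma itself, which makes your reduction moot (and is exactly what the paper does). The gap can, however, be closed explicitly. Substitute $t=u\sqrt s$ and integrate by parts. Then use $\int_0^\infty\phi(u)\,u^{-1}\,du=\frac12\int_0^\pi\phi(u)\cot u\,du$, valid for odd $\pi$-periodic $\phi$, applied to $\phi(u)=s\lvert\cos u\rvert^{s-1}\operatorname{sign}(\cos u)\sin u$. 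This gives
\[
F(s)=\frac{2}{\pi\sqrt s}\int_0^\infty\frac{1-\lvert\cos u\rvert^s}{u^2}\,du=\frac{2\sqrt s}{\pi}\int_0^{\pi/2}\cos^s v\,dv=\sqrt{\frac{s}{\pi}}\,\frac{\Gamma\bigl(\frac{s+1}{2}\bigr)}{\Gamma\bigl(\frac{s}{2}+1\bigr)}.
\]
This is consistent with $F(2)=1/\sqrt2$, $F(4)=3/4$, and $F(\infty)=\sqrt{2/\pi}$. With $x=s/2$ and the integral representation of the digamma function,
\[
\frac{d}{dx}\log F=\frac{1}{2x}-\bigl(\psi(x+1)-\psi(x+\tfrac12)\bigr)=\int_0^\infty e^{-xt}\Bigl(\frac12-\frac{1}{e^{t/2}+1}\Bigr)\,dt>0 .
\]
So $F$ is strictly increasing on $(0,\infty)$, and with this your outline becomes a complete, self-contained proof of the lemma.
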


We are now ready to formally state and prove our lower bound.

\begin{theorem}\label{thm:lower}
Let $b>0$. For every integer $n \ge 2$, there exist a convex function
$f\colon\R\to\R$, a Borel selection $g(x)\in\partial f(x)$, and a sequence of
random variables $(W_t)_{t \ge 1}$ such that:
\begin{enumerate}
    \item $f$ is uniquely minimized at $0$;
    \item $f$ is Lipschitz continuous with Lipschitz constant at most $b/2$;
    \item $(W_t)_{t \ge 1}$ is an i.i.d.\ sequence of scaled Rademacher random
    variables with magnitude at most $b$;
    \item the iterates generated by \eqref{eq:scalar-update} satisfy
    \[
        \bigl\lvert g(X_t)+W_t \bigr\rvert
    \le
        b
    \qquad
        \text{for every $t$},
    \]
    and
    \[
        \E\left[f(X_n)-\min_{x\in\R}f(x)\right]
    \ge
        \frac{\gamma b^2}{24} \cdot \frac{\log n}{\sqrt n}.
    \]
\end{enumerate}
\end{theorem}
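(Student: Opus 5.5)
The plan is to take $f$ to be a tent function $f(x)=\epsilon\abs{x}$ whose slope $\epsilon$ is tuned to the horizon $n$. The Rademacher noise then drives $X_n$ to distance about $\gamma b\sqrt{\log n}$ from the origin, and the drift cannot pull it back. Concretely, set $f(x)\coloneq\epsilon\abs{x}$ with $g(x)\coloneq\epsilon\,\mathrm{sign}(x)$ (and $g(0)\coloneq 0$), which is a Borel selection of $\partial f$. Let $W_t\coloneq (b/2)\xi_t$ with $(\xi_t)$ i.i.d.\ Rademacher, and choose
\[
    \epsilon \coloneq c\, b\,\sqrt{\frac{H_{n-1}}{n}}, \qquad H_{n-1}\coloneq\sum_{t=1}^{n-1}\frac1t ,
\]
for a small absolute constant $c>0$. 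Items 1--3 are then immediate once $\epsilon\le b/2$. This holds because $H_{n-1}\le 1+\log n\le n$ for $n\ge2$, so $\epsilon\le cb\le b/2$. The same bound gives $\abs{g(X_t)+W_t}\le \epsilon+b/2\le b$ for every $t$.

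For the main estimate, unroll \eqref{eq:scalar-update} from $X_1=0$ to get
\[
    X_n = -\sum_{t=1}^{n-1}\eta_t W_t - \sum_{t=1}^{n-1}\eta_t\, g(X_t).
\]
The first sum is a Rademacher sum with coefficients $a_t=\gamma b/(2\sqrt t)$, so $\sum_t a_t^2=(\gamma b/2)^2 H_{n-1}$. Lemma~\ref{lem:khintchine} then gives
\[
    \E\Bigl[\Bigl\lvert\sum_{t=1}^{n-1}\eta_tW_t\Bigr\rvert\Bigr]\ge \frac{\gamma b}{2\sqrt2}\sqrt{H_{n-1}}.
\]
The second sum is bounded deterministically, since $\abs{g}\le\epsilon$ and $\sum_{t=1}^{n-1}t^{-1/2}\le 2\sqrt{n-1}\le 2\sqrt n$. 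Hence its absolute value is at most $2\epsilon\gamma\sqrt n=2c\gamma b\sqrt{H_{n-1}}$.

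By the reverse triangle inequality,
\[
    \E\abs{X_n}\ge \gamma b\sqrt{H_{n-1}}\Bigl(\tfrac{1}{2\sqrt2}-2c\Bigr).
\]
Therefore
\[
    \E[f(X_n)]=\epsilon\,\E\abs{X_n}\ge c\Bigl(\tfrac1{2\sqrt2}-2c\Bigr)\gamma b^2\frac{H_{n-1}}{\sqrt n}.
\]
Finally, use $H_{n-1}\ge \log n$, which follows from $\sum_{t=1}^{n-1}1/t\ge\int_1^n dx/x$. This yields a bound of the form $C\gamma b^2(\log n)/\sqrt n$.

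The only delicate point is the constant. Optimizing over $c$ gives $c=1/(8\sqrt2)$, and then $c(\tfrac1{2\sqrt2}-2c)=1/64$. That falls short of the target $1/24$, so the constants must be tightened. I would try the following, in order.
\begin{enumerate}
    \item Use the full available noise magnitude, $W_t=\pm\beta b$ with $\beta$ close to $1$, paired with a smaller slope $\epsilon\le(1-\beta)b$. This balances the constraint $\epsilon+\beta b\le b$ against the gain in the noise term.
    \item Replace the crude drift bound by an argument exploiting that the drift pushes toward $0$, so it can shrink $\abs{X_n}$ but never flip the sign cheaply. For instance, a sign-symmetry or coupling argument showing that the drift reduces $\E\abs{X_n}$ by at most $\epsilon\sum\eta_t$ only on part of the mass.
    \item Use the sharper inequality $\sum_{t<n}t^{-1/2}\le 2\sqrt n-1$.
\end{enumerate}
If none of these reaches $1/24$ exactly, the structure of the argument is unchanged and only the constant in the final bound differs.
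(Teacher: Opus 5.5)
\textbf{There is a gap: the constant.} Your argument has the same skeleton as the paper's proof: the tent function $f(x)=\epsilon\abs{x}$, unrolling \eqref{eq:scalar-update}, the reverse triangle inequality, the Khintchine--Szarek lower bound on the noise sum, and the deterministic bound $\gamma\epsilon s$ on the drift sum, with $s=\sum_{t<n}t^{-1/2}$. Every step you carry out is correct. But with $W_t=(b/2)\xi_t$ the argument only gives the constant $1/64$, while the theorem asserts $1/24$. Your closing remark that ``only the constant differs'' concedes exactly the part of the statement that is still unproved.

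The loss comes from spending only half of the subgradient budget $b$ on the noise. If the noise has amplitude $\beta b$, optimizing your chain over $\epsilon$ gives a bound proportional to $\beta^2$, namely $\gamma\beta^2b^2h/(16\sqrt n)$ with $h=H_{n-1}$. So $\beta=1/2$ caps you at $1/64$. Your item~3 cannot recover this, because $2\sqrt n-1\sim 2\sqrt n$ only changes lower-order terms. Item~2 is left unspecified and turns out to be unnecessary.

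\textbf{The fix is your item~1, pushed to the extreme; this is what the paper does.} Take $W_t=(b-\epsilon)\xi_t$, so that $\abs{g(X_t)+W_t}\le\epsilon+(b-\epsilon)=b$ holds with no slack. Keep $\mu=\E\abs{\sum_{t<n}\xi_t/\sqrt t}$ exact rather than bounding it first. The same chain then gives
\[
\E[f(X_n)]\ge\gamma\epsilon\bigl((b-\epsilon)\mu-\epsilon s\bigr)=\gamma\epsilon\bigl(b\mu-\epsilon(\mu+s)\bigr).
\]
The paper chooses $\epsilon=\frac b2\cdot\frac{\mu}{\mu+s}\in(0,b/2)$, which maximizes this concave quadratic exactly. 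Using $\mu^2\ge h/2$, $\mu\le\sqrt h\le\sqrt n$, $s\le 2\sqrt n$ and $h\ge\log n$, this yields
\[
\E[f(X_n)]\ge\frac{\gamma b^2}{4}\cdot\frac{\mu^2}{\mu+s}\ge\frac{\gamma b^2}{8}\cdot\frac{h}{\sqrt h+s}\ge\frac{\gamma b^2}{24}\cdot\frac{\log n}{\sqrt n}.
\]

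If you prefer a closed-form $\epsilon$ as in your write-up, a fixed noise fraction also suffices. Take $W_t=\frac89 b\,\xi_t$ and $\epsilon=\frac{\sqrt2}{9}\,b\sqrt{H_{n-1}/n}$. Since $H_{n-1}\le n/2$ for $n\ge2$, you get $\epsilon\le b/9$, so the bounded-subgradient constraint holds. Your own computation then gives the constant $4/81$, which exceeds $1/24$.
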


\begin{proof}
Let $(\xi_t)_{t \ge 1}$ be an i.i.d.\ Rademacher sequence.
Let $s\coloneq\sum_{t=1}^{n-1}\frac{1}{\sqrt t}$.
Let $\mu\coloneq\E \left[\left|\sum_{t=1}^{n-1}\frac{\xi_t}{\sqrt t}\right|\right]$.
Define $\epsilon \coloneq \frac{b}{2} \cdot \frac{\mu}{\mu+s}$ and notice that $0 < \epsilon < b/2$.
Define
\[
    f(x)
\coloneq
    \epsilon |x|
\]
and the measurable subgradient selection
\[
    g(x)
\coloneq
    \begin{cases}
        \epsilon\operatorname{sign}(x), & x\ne0,\\
        0, & x=0.
    \end{cases}
\]
Then $f$ is convex, uniquely minimized at $0$, and $\epsilon$-Lipschitz.
Set
\[
    W_t
\coloneq
    (b-\epsilon)\xi_t.
\]
The variables $(W_t)_{t \ge 1}$ are i.i.d.\ scaled Rademacher random variables, and
\[
    \bigl\lvert g(X_t)+W_t \bigr\rvert
\le
    \bigl\lvert g(X_t) \bigr\rvert +|W_t|
\le
    \epsilon+(b-\epsilon)
=
    b.
\]
Notice that, unrolling \eqref{eq:scalar-update}, we have
\[
    X_n
=
    -\gamma(b-\epsilon) \sum_{t=1}^{n-1}\frac{\xi_t}{\sqrt t} -\gamma \sum_{t=1}^{n-1}\frac{g(X_t)}{\sqrt t},
\]
which in turn gives
\begin{align*}    
    \bigl\lvert f(X_{n}) \bigr\rvert
&=
    \epsilon\abs{X_n}
=
    \gamma \epsilon \cdot \abs*{-(b-\epsilon)\sum_{t=1}^{n - 1} \frac{\xi_t}{\sqrt{t}} - \sum_{t=1}^{n - 1} \frac{g(X_t)}{\sqrt{t}}}
\ge 
    \gamma \epsilon \cdot \left(\abs*{-(b-\epsilon)\sum_{t=1}^{n - 1} \frac{\xi_t}{\sqrt{t}}} - \abs*{\sum_{t=1}^{n - 1} \frac{g(X_t)}{\sqrt{t}}}\right)
\\
&=
    \gamma \epsilon \cdot \left( (b - \epsilon) \cdot \abs*{\sum_{t=1}^{n - 1} \frac{\xi_t}{\sqrt{t}}} - \abs*{\sum_{t=1}^{n - 1} \frac{g(X_t)}{\sqrt{t}}}\right)
\ge
    \gamma \epsilon \cdot \left( (b - \epsilon) \cdot \abs*{\sum_{t=1}^{n - 1} \frac{\xi_t}{\sqrt{t}}} - \epsilon s \right),
\end{align*}
where we used the triangle inequality, $0 < \epsilon < b/2$, and that $\abs{g(x)} \le \epsilon$ for every $x \in \R$.
Taking expectations yields
\[
    \E\bigl[f(X_n)\bigr]
\ge
    \gamma\epsilon \cdot \bigl((b - \epsilon) \mu-\epsilon s \bigr)
=
    \gamma\epsilon \cdot \bigl(b\mu-\epsilon(\mu+s)\bigr).
\]
and substituting the value of $\epsilon$ gives
\[
    \E\bigl[f(X_n)\bigr]
\ge
    \frac{\gamma b^2}{4} \cdot \frac{\mu^2}{\mu+s}.
\]
Set
$
    h
\coloneq
    \sum_{t=1}^{n-1}\frac{1}{t}
$.
By Lemma~\ref{lem:khintchine},
\[
    \mu^2
\ge
    \frac{h}{2}
\qquad
    \text{and}
\qquad
    \mu
\le
    \sqrt h,
\]
and therefore
\begin{equation}\label{eq:technical-lower}
    \E\bigl[f(X_n)\bigr]
\ge
    \frac{\gamma b^2}{8} \cdot \frac{h}{s+\sqrt h}.
\end{equation}
It remains to convert \eqref{eq:technical-lower} into the claimed logarithmic bound.
For every $n \ge 2$, the elementary estimates
\[
    h
\ge
    \int_1^n \frac{\mathrm{d}x}{x}
=
    \log n,
\]
\[
    s
\le
    1+\int_1^{n-1}\frac{\mathrm{d}x}{\sqrt{x}}
=
    2\sqrt{n-1}-1
\le
    2\sqrt n,
\]
and $h \le n-1\le n$ directly implies
\[
    \frac{h}{s+\sqrt h}
\ge
    \frac{\log n}{3\sqrt n}.
\]
Combining this with \eqref{eq:technical-lower} gives, for every $n \ge 2$,
\[
    \E\bigl[f(X_n)\bigr]
\ge
    \frac{\gamma b^2}{24} \cdot \frac{\log n}{\sqrt n},
\]
as claimed.
\end{proof}

\section*{Acknowledgments}
TC gratefully acknowledges the support of the Natural Sciences and Engineering Research Council of Canada (NSERC) through grant RGPIN-2023-03688 (Discovery Grants Program).

\bibliographystyle{plain}
\bibliography{biblio}

@misc{Orabona2020,
  author       = {Francesco Orabona},
  title        = {Last Iterate of {SGD} Converges (Even in Unbounded Domains)},
  year         = {2020},
  month        = aug,
  howpublished = {\url{https://parameterfree.com/2020/08/07/last-iterate-of-sgd-converges-even-in-unbounded-domains/}},
  url          = {https://parameterfree.com/2020/08/07/last-iterate-of-sgd-converges-even-in-unbounded-domains/},
  note         = {Parameter-free Learning and Optimization Algorithms, blog post, 7 August 2020}
}

@article{Szarek1976,
  author  = {Stanis{\l}aw J. Szarek},
  title   = {On the Best Constants in the {Khinchin} Inequality},
  journal = {Studia Mathematica},
  volume  = {58},
  number  = {2},
  pages   = {197--208},
  year    = {1976},
  doi     = {10.4064/sm-58-2-197-208},
  url     = {https://doi.org/10.4064/sm-58-2-197-208}
}

@misc{BerettaEtAl2026,
  author        = {Guglielmo Beretta and Tommaso Cesari and Roberto Colomboni and Andrea Paudice},
  title         = {New Bounds for the Last Iterate of the {Stochastic subGradient Method}},
  note       = {arXiv preprint {\tt arXiv:2606.24879}},
  year          = {2026},
  eprint        = {2606.24879},
  archivePrefix = {arXiv},
  primaryClass  = {math.OC}
}

\end{document}